\theoremstyle{theorem}
\newtheorem{theorem}{Theorem}
\theoremstyle{definition}
\newtheorem*{definition}{Definition}
\newtheorem*{acknowledgment}{Acknowledgement}
\let\mkbld\boldsymbol
\def\killsphi#1{#1(\mkbld\phi)=0}
\def\Wr{\textup{Wr}}
\def\strut{\vrule height 12pt depth 4pt width 0pt}
\begin{document}

\title{Factorization of a Matrix Differential Operator Using Functions in its Kernel }


\maketitle
\markboth{Factorization of a Matrix Differential Operator}{Factorization of a Matrix Differential Operator}

\begin{abstract}
Just as knowing some roots of a polynomial allows one to factor it, a
well-known result provides a factorization of any scalar differential
operator given a set of linearly independent functions in its kernel.
This note provides a straight-forward generalization to the case of
matrix coefficient differential operators.
\end{abstract}

\section{Motivation}

An ordinary differential operator (ODO) is a commonly used mathematical object that turns one function\footnote{Here and throughout this note, ``functions of $x$'' will be understood to mean ``\textit{sufficiently differentiable} functions of $x$'' even if differentiability is not specifically mentioned.} into another by adding up the products of some other specified functions with its derivatives.
Symbolically, an ODO is a polynomial in ``$\partial$'' having coefficients that are  functions of $x$.  The highest power of $\partial$ appearing with a non-zero coefficient is called the \textit{order} of the operator.  ODOs act on functions of $x$ according to the rule:
\begin{equation}
L=\sum_{i=0}^n \alpha_i(x)\partial^i\qquad
\hbox{implies}
\qquad
L(f)=\sum_{i=0}^n \alpha_i(x)f^{(i)}(x).\label{eqn:ODOapply}
\end{equation}

Much interest in differential operators comes from their use in writing linear differential equations.
However, 
ODOs also have the \textit{algebraic} structure of a \relax{non-commutative ring}.  
They can be added as 
one would add any polynomials,
by combining the coefficients of similar powers of $\partial$, 
and
 multiplication is defined by extending the following rule for
the product of two monomials linearly over sums:
\begin{equation}
\left(\alpha(x)\partial^m\right)\circ\left(\beta(x)\partial^n\right)=\sum_{i=0}^m {m\choose i}\alpha(x)\beta^{(i)}(x)\partial^{m+n-i}.
\label{eqn:ODOmult}
\end{equation}
Although this definition may look complicated at first, in fact it is simply a consequence of the usual ``product rule'' from calculus, applied here to guarantee that multiplication corresponds to operator composition as one would expect.  That is, this definition was chosen so that $L\circ Q(f)=L(Q(f))$.

Given the rule \eqref{eqn:ODOmult} for multiplication, the inverse question of \textit{factorization} naturally arises.
One \textit{factorization method} for ODOs is surprisingly reminiscent of a familiar fact about polynomials.
If  $x=\lambda$ is a root of the polynomial $p(x)$, then you know that it has a factor of $x-\lambda$, the simplest first degree polynomial with this property.  
Similarly, for any non-zero function $f(x)$, $\partial-f'/f$ is the simplest first order differential operator having $f$ in its kernel, and if $L$ is any ODO with $f$ in its kernel then $L=Q\circ (\partial - f'/f)$ for some differential operator $Q$. 
Moreover, just as knowing additional roots of the polynomial $p$ would allow further factorization, one may factor a differential operator of order $n$ into the product of operators $n-k$ and $k$ from the knowledge of $k$ linearly independent functions in its kernel.  The general statement written in terms of Wronskian determinants\footnote{The Wronskian determinant $\Wr(\phi_1,\ldots,\phi_n)$ of the functions $\phi_i(x)$ is defined to be the determinant of the $n\times n$ matrix having $\frac{d^{i-1}}{dx^{i-1}}\phi_j(x)$ in row $i$ and column $j$.} is as follows:

\begin{theorem}[Scalar Case]\label{thm:scalar}
Let $\phi_1,\ldots,\phi_m$ be functions such that $\Wr(\phi_1,\ldots,\phi_m)\not=0$. Then (a) the unique monic differential operator $K$ of order $m$ satisfying $K(\phi_i)=0$ for $1\leq i\leq m$ is the one whose action on an arbitrary function $f(x)$ is given by the formula
$K(f)={\Wr(\phi_1,\ldots,\phi_m,f)}/{\Wr(\phi_1,\ldots,\phi_m)}$
and (b) if $L$ is any differential operator satisfying $L(\phi_i)=0$ for $1\leq i\leq m$ then there exists a differential operator $Q$ such that $L=Q\circ K$.
\end{theorem}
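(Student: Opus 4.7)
The plan is to handle parts (a) and (b) in sequence. The key auxiliary fact, stated now and used in both parts, is that a nonzero ODO of order $k$ cannot annihilate more than $k$ functions whose Wronskian is nonzero; this is the classical existence-and-uniqueness theorem for scalar linear ODEs, applied on an interval where the leading coefficient of the operator does not vanish.

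For part (a), I would begin by expanding $\Wr(\phi_1,\ldots,\phi_m,f)$ along its last column. This writes the Wronskian as $\sum_{i=0}^m c_i(x)\,f^{(i)}(x)$, where each $c_i$ is, up to sign, the determinant of an $m\times m$ minor depending only on the $\phi_j$; in particular $c_m = \Wr(\phi_1,\ldots,\phi_m)$. Dividing through by this nonzero quantity shows the proposed formula really does define a monic ODO of order exactly $m$. Substituting $f=\phi_j$ produces a matrix with two identical columns, so $K(\phi_j)=0$ for each $j$. For uniqueness, any other monic order-$m$ annihilator $K'$ would make $K - K'$ an operator of order strictly less than $m$ that still kills all $m$ of the $\phi_i$; since a nonzero Wronskian implies linear independence, the auxiliary fact forces $K = K'$.

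For part (b), I would invoke the right division algorithm in the ring of ODOs: because $K$ is monic, the usual polynomial-style reduction, in which one kills the leading term of $L$ by subtracting an appropriate left monomial multiple of $K$ and iterates (using \eqref{eqn:ODOmult} to see that the leading term behaves as expected), produces operators $Q$ and $R$ with $L = Q\circ K + R$ and either $R = 0$ or $\mathrm{ord}(R) < m$. Applying both sides to $\phi_i$ and using $K(\phi_i) = 0$ gives $R(\phi_i) = 0$ for every $i$, so the auxiliary fact again forces $R = 0$ and hence $L = Q\circ K$.

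The main obstacle is really just the auxiliary dimension-of-kernel fact; the rest is bookkeeping with determinants and the product rule \eqref{eqn:ODOmult}. In the scalar setting this obstacle is mild and essentially classical, but some care is needed to work on an interval where all relevant leading coefficients are nonvanishing, so that the ODE existence-and-uniqueness theorem applies cleanly; anticipating the matrix generalization to come, this is the step whose analogue will need the most thought.
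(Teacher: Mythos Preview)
Your argument is correct for the scalar case, but it is precisely the ``common proof'' that the paper alludes to and deliberately \emph{avoids}. You rely twice on the auxiliary fact that a nonzero ODO of order $k$ has a kernel of dimension at most $k$ (once for uniqueness in (a), once to kill the remainder $R$ in (b)); the paper instead gives a purely linear-algebraic argument. In the scalar specialization of the paper's proof, one builds the rectangular matrix $\mkbld{\Phi}_m$ and an invertible matrix $G$ whose upper block is $\Phi^{-1}$ and whose lower rows encode the coefficients of $\partial^{i-M}\circ K$; the product $G\mkbld{\Phi}_m$ then has the block form $\left(\begin{smallmatrix}I&B\\0&*\end{smallmatrix}\right)$, and writing the coefficient row of $L$ as $\mkbld q\,G$ forces the first $M$ entries of $\mkbld q$ to vanish and reads off $Q$ from the rest. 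Uniqueness then follows from (b) itself by a degree count, with no appeal to kernel dimensions.

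What your approach buys is familiarity and brevity: cofactor expansion, the division algorithm, and classical ODE theory are all textbook tools. What the paper's approach buys is exactly what the paper is after: it never invokes the finite-dimensionality of the kernel, and so it survives verbatim when the coefficients become $N\times N$ matrices, where (as the paper stresses) a MODO with singular leading coefficient can have an infinite-dimensional kernel and your auxiliary fact fails outright. You already flag this at the end of your proposal, and that instinct is right; the paper's point is that the obstacle is not merely something that ``needs more thought'' but is genuinely fatal to the kernel-dimension route in the matrix setting.
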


\section{The Matrix Case}

The purpose of this note is to generalize this well-known and useful result to the case of matrix coefficients\footnote{Another sort of higher-dimensional generalization is the case of \textit{partial} differential operators, which is considered in \cite{FactorPDOs}.}.  A matrix coefficient ordinary differential operator (MODO) is again a polynomial in $\partial$ with coefficients that depend on $x$, but we now consider the case in which those coefficients are $N\times N$ matrices.  The formulas for multiplying MODOs and for applying them to functions remain the same (see \eqref{eqn:ODOapply} and \eqref{eqn:ODOmult}), but now the products involving the coefficients are understood to be matrix products and the function $f$ is an $N$-vector valued function.

Matrix analogues of Theorem~\ref{thm:scalar}(a) already appear in the
literature.  For example, a result of Etingof-Gelfand-Retakh
\cite{QD} allows one to produce a monic MODO
with a specified kernel using quasi-determinants.  However, not only is there
no published analogue of Theorem~\ref{thm:scalar}(b) for MODOs, it
appears that many researchers have suspected that it does not
generalize nicely to the matrix case.  A common proof of
Theorem~\ref{thm:scalar}(b) depends on the fact that any non-zero 
ODO of order at most $n$ has a kernel of dimension at most $n$.  So, the fact that any
MODO with a singular leading coefficient has an infinite-dimensional
kernel is both an obstacle to generalizing the proof and reason
to doubt the validity of the equivalent statement for MODOs.

It is therefore good news that
Theorem~\ref{thm:modo}(b) below does indeed fully generalize the scalar result to the matrix case without imposing any additional restrictions on the leading coefficient, order or kernel of the operator $L$.  In addition to proving this \textit{new} result, what follows can be seen as providing a novel alternative proof to Theorem~\ref{thm:scalar} and Theorem~\ref{thm:modo}(a). 

\begin{definition}  Let  $\phi_1(x),\ldots,\phi_{MN}(x)$ be $N$-vector valued functions and let $\Phi$ be the $MN\times MN$ \textit{block Wronskian matrix}
\begin{equation}
\Phi=\left(\begin{matrix} \phi_1&\phi_2&\cdots&\phi_{MN}\\
 \phi_1'&\phi_2'&\cdots&\phi_{MN}'\\
 \vdots&\vdots&\ddots&\vdots\\
  \phi_1^{(M-1)}&\phi_2^{(M-1)}&\cdots&\phi_{MN}^{(M-1)}
  \end{matrix}\right).
\label{eqn:blockWr}
\end{equation}
Note that the first $N$ rows of $\Phi$ are given by the selected vector functions and then every other element is the derivative of the element $N$ rows above it.  In the case $N=1$, the non-singularity of this matrix is famously equivalent to the linear independence of the functions, but that is not true in general.  For instance, if $N=2$, $M=1$, $\phi_1=(1\ 1)^{\top}$ and $\phi_2=(x\ x)^{\top}$, then $\det(\Phi)=0$ even though these are linearly independent functions of $x$.
\end{definition}

For the sake of brevity, the following notations will be utilized below.
The symbol $\mkbld\phi$ will denote the $N\times MN$ matrix
$(\phi_1\ \cdots\ \phi_{MN})$ and $\killsphi{D}$ will be used as a
shorthand for the statement that $D(\phi_i)$ is equal to the zero
vector for each $1\leq i\leq MN$ (i.e. that the vector functions are in
the kernel of some linear operator $D$).  The $N\times N$ identity matrix will be written
simply as $I$ and more generally the $m\times m$ identity matrix for
any natural number $m$ will be denoted by $I_m$.

The main result can now be stated concisely as:
\begin{theorem}[Matrix Case]\label{thm:modo}
If the functions $\phi_i$ are chosen so that $\det \Phi\not=0$ then (a) the differential operator
\begin{equation}
K=I\,\partial^M-\left(\phi_1^{(M)}\ \cdots\ \phi_{MN}^{(M)}\right)\Phi^{-1}\left(\begin{matrix}
I\\
\partial I\\
\vdots\\
\partial^{M-1}I\end{matrix}\right)\label{eqn:QD}
\end{equation}
is the  unique monic\ MODO $K$ of order $M$ such that $\killsphi{K}$ and (b) if $L$ is any MODO such that $\killsphi{L}$ then there exists a MODO $Q$ such that $L=Q\circ K$.
\end{theorem}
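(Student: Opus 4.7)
The plan is to handle part (a) with a direct calculation, and then reduce part (b) to part (a) via right division.

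For part (a), I would first verify by substitution that $\killsphi{K}$. The column vector $(\phi_j,\phi_j',\ldots,\phi_j^{(M-1)})^{\top}$ is precisely the $j$-th block column of $\Phi$, so multiplying it on the left by $\Phi^{-1}$ yields the $j$-th standard basis vector of length $MN$, and a further multiplication by the row $(\phi_1^{(M)}\ \cdots\ \phi_{MN}^{(M)})$ picks out $\phi_j^{(M)}$, exactly cancelling the leading $I\partial^M$ term. For uniqueness, if $K'$ is any other monic MODO of order $M$ with $\killsphi{K'}$, then $K-K' = \sum_{i=0}^{M-1} A_i\partial^i$ has order strictly less than $M$. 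Applying this difference to each $\phi_j$ and stacking the resulting identities as the columns of a single matrix equation gives $(A_0\ A_1\ \cdots\ A_{M-1})\,\Phi = 0$, and the invertibility of $\Phi$ then forces every $A_i=0$.

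For part (b), the main idea is right-division by $K$. Since $K$ is monic with leading coefficient $I$, I can imitate the Euclidean algorithm: given $L$ of order $n\geq M$ with top term $A_n\partial^n$, the operator $A_n\partial^{n-M}\circ K$ has the same leading term (by \eqref{eqn:ODOmult}, since $I$ is constant no cross-derivative corrections contribute at degree $n$), and subtracting it from $L$ strictly lowers the order. Iterating yields a presentation $L = Q\circ K + R$ with the order of $R$ strictly less than $M$. Because $\killsphi{L}$ and $\killsphi{K}$, we also have $\killsphi{R}$, and then the same linear-algebra argument used for uniqueness in (a) (invertibility of $\Phi$ applied to the matrix built from the coefficients of $R$) forces $R = 0$, giving $L = Q\circ K$.

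The main obstacle is conceptual rather than computational. One could reasonably worry that non-commutativity of the coefficient ring, or the possibility of a singular leading coefficient of $L$, could obstruct the division step --- and as the introduction points out, this is precisely why earlier authors suspected that part (b) would not generalize. The observation that dissolves both concerns is that only the \emph{divisor} $K$ needs an invertible leading coefficient in order for right division to terminate; $K$ is monic by construction, and no property of the leading coefficient of $L$ is ever invoked. Once that point is settled, the hypothesis $\det\Phi\neq 0$ absorbs all the remaining work in both parts.
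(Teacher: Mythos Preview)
Your argument is correct, and it follows a genuinely different route from the paper's. For part (a) you verify $K(\phi_j)=0$ by direct substitution and deduce uniqueness from the invertibility of $\Phi$; the paper instead packages both the $\phi_j$ and the symbols $\partial^i I$ into a single augmented block matrix $\mkbld{\Phi}_M$, reads off $K$ as a Schur complement, and postpones uniqueness until after (b). For part (b) you perform Euclidean right-division by the monic operator $K$ to obtain $L=Q\circ K+R$ with $\deg R<M$, and then kill $R$ by the same $\Phi$-invertibility argument. The paper avoids division altogether: it builds an explicit invertible matrix $G$ whose bottom block rows encode the coefficients of $\partial^{i-M}\circ K$, sets $\mkbld q=(\alpha_0\ \cdots\ \alpha_m)G^{-1}$, and computes $\mkbld q(G\mkbld{\Phi}_m)$ two ways to see simultaneously that the first $M$ blocks of $\mkbld q$ vanish and that the remaining blocks are the coefficients of $Q$. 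Your approach is shorter and more transparent, and your closing remark pinpoints exactly why the ``infinite-dimensional kernel'' worry is a red herring: right division only needs the \emph{divisor} to be monic. What the paper's approach buys in exchange for the extra machinery is a closed-form matrix expression for $Q$ and a single unified framework in which existence, uniqueness, and the factorization all fall out of the same block-matrix identity.
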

 %

\begin{proof} 
For a natural number $m\geq M$, let $\mkbld{\Phi}_m$ denote the $(mN+N)\times (MN+N)$ matrix with block decomposition
$$
\mkbld{\Phi}_{m}=\left(\begin{matrix} \phi_1&\phi_2&\cdots&\phi_{MN}&I\\
 \phi_1'&\phi_2'&\cdots&\phi_{MN}'&\partial I\\
 \vdots&\vdots&\cdots&\vdots&\vdots\\
  \phi_1^{(m)}&\phi_2^{(m)}&\cdots&\phi_{MN}^{(m)}&\partial^{m} I
  \end{matrix}\right).
$$
It is a nearly trivial observation that for a MODO $L$ of order at most $m$
one has
\begin{equation}
L=\sum_{i=0}^m\alpha_i(x)\partial^i\ 
\Leftrightarrow
\ 
\left(\alpha_0\ \alpha_1\ \cdots\ \alpha_m\right)\mkbld{\Phi}_m=(L(\phi_1)\ \cdots\ L(\phi_{MN})\ L).
\label{eqn:alpha}
\end{equation}
That is, for any choice of $L$ the product of the $N\times (mN+N)$ matrix $\mkbld\alpha$
made from its coefficients 
with $\mkbld{\Phi}_m$ has the vector  $L(\phi_i)$ as its $i^{th}$ column for $1\leq i\leq MN$ 
and the last $N\times
N$ block is a copy of the operator itself, and conversely for 
any choice of $N\times (mN+N)$ matrix $\mkbld\alpha$
its product with $\mkbld{\Phi}_m$ yields a matrix that records a differential operator $L$ and its action on each of the vector functions as in
\eqref{eqn:alpha}.

In the case that $m=M$ and the $N\times N$ blocks $\alpha_i$ are defined by
$$
\mkbld\alpha=(\alpha_0\ \cdots\ \alpha_M)=\left(-\phi_1^{(M)} \cdots\ -\phi_{MN}^{(M)}\ I\right)\left(\begin{matrix}\Phi^{-1}&0\\
0&I\end{matrix}\right),
$$
the product  $\mkbld\alpha\mkbld{\Phi}_m$ in \eqref{eqn:alpha} would equal
$$
\left(-\phi_1^{(M)} \cdots\ -\phi_{MN}^{(M)}\ I\right)\left(\begin{matrix}\Phi^{-1}&0\\
0&I\end{matrix}\right)\mkbld{\Phi}_M=\left(-\mkbld\phi^{(M)}\ I\right)\left(\begin{matrix}
I_{MN}&B\\
\mkbld\phi^{(M)}&\partial^M I\end{matrix}\right)=(0\ K)
$$ for some MODO $K$.  Because the MODOs in the block $B$ are of
degree at most $M-1$, $K$ is monic of degree $M$.  We know that $\killsphi{K}$ 
since the first 
$MN$ columns, which are all zero, record its action on the functions
$\phi_i$.  This operator $K$ can
equivalently be produced with the formula \eqref{eqn:QD}, as the
quasi-determinant $|\mkbld{\Phi}_M|_{M+1,M+1}$ where $\Phi_M$ is viewed as an $(M+1)\times(M+1)$ matrix with entries that are $N\times N$ matrices or as the Schur complement of
the invertible block $\Phi$ in the matrix $\mkbld{\Phi}_M$.  This demonstrates
the \textit{existence} of the operator $K$ promised in (a).  (This could also have been achieved in many other ways, including merely by direct computation of $K(\phi_i)$, but doing it this way sets us up nicely for proving the rest of the claim.)


Let $G$ be the $(mN+N)\times(mN+N)$ matrix whose decomposition into $N\times (mN+N)$ blocks
$$
G=\left(\begin{matrix}G_0\\G_1\\\vdots\\G_{m}\end{matrix}\right)
\qquad
\hbox{is such that}
\qquad
\left(\begin{matrix}G_0\\G_1\\\vdots\\G_{M-1}\end{matrix}\right)=\left(\begin{matrix}\Phi^{-1}&0\end{matrix}\right)
$$
and for $i\geq M$ the block row
$\displaystyle
G_i=( g_{i0} \ g_{i1}\ \cdots\ g_{im} )$ where the functions $g_{ij}$ are defined by the equation
$\partial^{i-M}\circ K=\sum_{j=0}^{m} g_{ij}(x)\partial^j$.
Consider the product $G\mkbld{\Phi}_m$.  Its first $MN$ rows would have the form $(I_{MN}\ B)$ since $\Phi^{-1}\Phi=I_{MN}$.  For $i\geq M$, the product of block row $G_i$ with $\mkbld{\Phi}_m$ is designed so that the MODO $\partial^{i-M}\circ K$ shows up as its last $N\times N$ block.  According to \eqref{eqn:alpha}, the previous columns would be the image of $\mkbld\phi$ under the action of that operator, but $\killsphi{\partial^{i-M}\circ K}$ so
\begin{equation}
G\mkbld{\Phi}_m=\left(\begin{matrix}I_{MN}&B\\
0&K\\
0&\partial\circ K\\
\vdots&\vdots\\
0&\partial^{m-M}\circ K\end{matrix}\right).\label{eqn:GPhim}
\end{equation}

Now, suppose $L$ is any MODO of order at most $m$ satisfying $\killsphi{L}$.
Label its coefficients 
as in \eqref{eqn:alpha}
and consider the product of the $N\times (mN+N)$ matrix\footnote{
The matrix $G$ is invertible because its top-left $MN\times MN$ block
($\Phi^{-1}$) is invertible and below that it is lower-triangular with $1$'s along the diagonal.  }
$\mkbld{q}=
(\alpha_0\ \cdots\ \alpha_m)G^{-1}$ and the matrix $\mkbld\Psi=G\mkbld{\Phi}_m$.  On the one hand, because
the $G^{-1}$ and $G$ cancel, $\mkbld{q}\mkbld\Psi=(\alpha_0\ \cdots\ \alpha_m)\mkbld{\Phi}_m$ is equal to the expression on the
right in \eqref{eqn:alpha}.  Specifically, given the assumption
$\killsphi{L}$ it has the form $(0\ \cdots\ 0\ L)$.  

On the other hand, defining the $N\times N$ functions $q_i$ by the block decomposition
$\mkbld{q}=(q_0\ \cdots\ q_m)$ and making use
of the block
form of $\mkbld\Psi=G\mkbld{\Phi}_m$ in \eqref{eqn:GPhim} it is clear that $\mkbld{q}\mkbld\Psi$ is \textit{also}
equal to $( q_0\ q_1\ \ldots q_{M-1}\ L)$, because the block $I_{MN}$ picks out and preserves the precisely the first $M$ coefficient blocks.  

Combining these two observations, we conclude that $q_i=0$ for $0\leq i\leq M-1$.
Considering only the last block column in the product $\mkbld{q}\mkbld\Psi$, one obtains an
expression for $L$ as a sum involving the functions $q_i$ as coefficients multplied by the operators in the last block column of $G\mkbld{\Phi}_m$, 
but given the fact that the first $M$ of those coefficients are zero this
simplifies to:
$$
L=\sum_{i=M}^{m} q_i \left(\partial^{i-M}\circ K \right)
=\left(\sum_{i=0}^{m-M} q_{i+M}\partial^{i}\right)\circ K.
$$
Then the operator in parentheses above is the operator $Q$ satisfying
the claim in (b).

Finally, suppose  $\hat K$ was also a  monic MODOs of order $M$ such that $\killsphi{\hat K}$.  Then $D=K-\hat K$ is an operator of order strictly less than $M$ with this same property.  The only way that $D$ can have order less than $M$ and also satisfy $D=Q\circ K$ for some MODO $Q$ is if $Q=D=0$, which demonstrates the uniqueness of $K$ and completes the proof.
\end{proof}

\section{Example}

Consider the case $M=N=2$ and the vector functions
$$
\phi_1=\left(\begin{matrix}x^3\cr -x^3\end{matrix}\right),
\ 
\phi_2=\left(\begin{matrix}x^2\cr0\end{matrix}\right),
\ 
\phi_3=\left(\begin{matrix}0\cr x\end{matrix}\right),
\ \hbox{and}\ 
\phi_4=\left(\begin{matrix}1\cr0\end{matrix}\right).
$$
Then the block Wronskian matrix in \eqref{eqn:blockWr} is the matrix
$$
\Phi=\left(\begin{matrix}
 x^3 & x^2 & 0 & 1 \cr
 -x^3 & 0 & x & 0 \cr
 3 x^2 & 2 x & 0 & 0 \cr
 -3 x^2 & 0 & 1 & 0 \cr
 \end{matrix}\right),
 $$
 whose invertibility assures us that there is a \textit{unique} monic MODO of order $2$ having all four of these vectors in its kernel. Using \eqref{eqn:QD} we can easily determine that this operator is
 $$
 K=I \,\partial^2+\left(\begin{matrix}\strut -\frac{1}{x} & \frac{3}{2 x} \cr
 0 &\strut -\frac{3}{x}\end{matrix}\right)\partial
 +\left(\begin{matrix} \strut 0 & -\frac{3}{2 x^2} \cr
 \strut 0 & \frac{3}{x^2} \end{matrix}\right).
 $$
 
  Another MODO that  obviously also has each $\phi_i$ in its kernel is $$
 L=\left(\begin{matrix}1&1\cr1&1\end{matrix}\right)\partial^3.
 $$
 Especially since the leading coefficient of $L$ is non-zero and singular, a situation that cannot arise in the case $N=1$, without the theorem above it would not be clear that there is an algebraic relationship between $L$ and $K$.
  However, Theorem~\ref{thm:modo} assures us that \textit{any} MODO having these vectors in its kernel must have $K$ as a right factor. and so there must be a differential operator $Q$ such that $L=Q\circ K$.   (In fact, one can check that
  $$
  Q= \left(\begin{matrix}1&1\cr1&1\end{matrix}\right)\partial + \left(\begin{matrix}\strut \frac{1}{x} & \frac{3}{2 x} \cr
 \strut \frac{1}{x} & \frac{3}{2 x}\end{matrix}\right)
 $$
 realizes this factorization.)

 \begin{acknowledgment}
 The author wishes to thank Maarten Bergvelt, Michael Gekhtman, Tom Kunkle and
 Chunxia Li for advice, assistance and encouragement.
 \end{acknowledgment}


\begin{thebibliography}{1}
\bibitem{QD} P. Etingof, I. Gelfand, and V. Retakh, Factorization of differential operators, quasideterminants, and nonabelian Toda field equations, \textit{Math. Res. Letters} 4 (1997), no. 2-3,
413–425.
\bibitem{FactorPDOs} A. Kasman, Kernel Inspired Factorizations of Partial Differential Operators, \textit{The Journal of Mathematical Analysis and Applications} 234/2 (1999) pp. 580-591. 
\end{thebibliography}
\end{document}